\newcommand{\Hmm}[1]{\leavevmode{\marginpar{\tiny%
$\hbox to 0mm{\hspace*{-0.5mm}$\leftarrow$\hss}%
\vcenter{\vrule depth 0.1mm height 0.1mm width \the\marginparwidth}%
\hbox to 0mm{\hss$\rightarrow$\hspace*{-0.5mm}}$\\\relax\raggedright #1}}}
\newcommand{\ran}{\mathrm{Ran}}
\newtheorem{thm}{Theorem}[section]
\newtheorem{lemma}[thm]{Lemma}
\newtheorem{pro}[thm]{Proposition}
\theoremstyle{definition}
\newtheorem*{defi}{Definition}
\newcommand{\R}{{\mathbb R}}
\newcommand{\N}{{\mathbb N}}
\newcommand{\de}{{\delta}}
\newcommand{\eps}{{\varepsilon}}
\newcommand{\gm}{{\gamma}}
\newcommand{\supp}{{\mathrm{supp}\,}}
\begin{document}
\title[Hopf-Rinow]{A  discrete Hopf-Rinow-theorem}

\author[M. Keller]{Matthias Keller}
\author[F. M\"unch]{Florentin  M\"unch}
\address{Matthias Keller, Florentin M\"unch, Universit\"at Potsdam, Institut f\"ur Mathematik, 14476  Potsdam, Germany}
\email{matthias.keller@uni-potsdam.de\\chmuench@uni-potsdam.de}


\date{\today}
\maketitle

\begin{abstract}
We prove a version of the Hopf-Rinow-theorem with respect to path metrics on discrete spaces. The novel aspect is that we do not a priori assume local finiteness but isolate a local finiteness type condition, called essential local finiteness,  that is indeed necessary. As a side product  we identify the maximal weight, called the geodesic weight, which generates the path metric in the situation when the space is complete with respect to any of the equivalent notions of completeness proven in the Hopf-Rinow theorem. As an application, we characterize the graphs for which  the resistance metric is a path metric induced by the graph structure.
\end{abstract}

\section{Introduction}
In 1931  Hopf and Rinow, \cite{HopfRinow}, have proven a most fundamental theorems in differential geometry about the equivalence of metric and geodesic completeness, as well as compactness of distance balls on Riemannian manifolds. For a modern reference we refer to \cite{Jost17}.

Recently, the investigation of metrics on graphs have gained strong momentum. Specifically, path metrics play an important role in the study of operator theory \cite{CdVTT11,HKMW13,Mil11}, spectral geometry \cite{BHK13,HKW13} and the heat equation \cite{BHY17,Fol11,Fol14,Hua14}. While the interest was first in locally finite graphs only, nowadays general weighted graphs gain more importance \cite{Ke15} as a crucial example class of non-local Dirichlet forms and their associated jump processes. Thus, geometric results which establish the analogy to the continuum setting of Riemannian manifolds are essential.

As for a Hopf-Rinow theorem, first discrete versions  have been proven in  \cite{Mil11} and \cite{HKMW13}. The argument given in \cite{Mil11} is based on length spaces in the sense of
Burago-Burago-Ivanov \cite{BBI01} and, while not mentioned explicitly, the length spaces in question are metric graphs
associated to discrete graphs. On the other hand, the proof given in \cite{HKMW13} is purely combinatorial.

In both of these,  the assumption of local finiteness of is central. Specifically in \cite{HKMW13}, examples are given which show that this assumption can not simply be removed.

The purpose of this article is to investigate the question to which extent this assumption is also necessary. The answer is that we only need a property to which we refer to as \emph{essential local finiteness}.  Essential local finiteness means that for every vertex the number of vertices which are ``close'' is  finite which will be made precise below. This way, the assumption of essential local finiteness becomes part of the characterization.
As a side product, we identify the maximal weight inducing the metric in the case when the path metric space is complete.

Furthermore, as an application we characterize when the resistance metric is a path metric induced by the graph structure.

\textbf{Acknowledgments.} Funding through the DFG is acknowledged by the authors.

\subsection{Set-up and definitions}

Throughout the  article, let $X$ be a countable set whose elements are called \emph{vertices}. We define the \emph{path space} of $X$ via
\[
\Pi(X) := \{\gamma: \{0,\ldots,n\} \to X \; | \;  n \in \N_0, \gamma \mbox{ injective}  \}
\]
and the \emph{space of infinite paths} of $X$ via
\[
\Pi_\infty (X):= \{\gamma: \N_0 \to X \; | \;  \gamma \mbox{ injective}  \}.
\]
We identify paths $ \gm $ in $ \Pi(X) $ (or $ \Pi_{\infty}(X) $) with $(\gamma(0),\ldots,\gamma(n))$ (or   with $(\gamma(0),\gamma(1),\ldots)$). For $\gamma = (x_0,\ldots,x_n) \in \Pi(X)$, we say $\gamma$ is a \emph{path} from $x_0$ to $x_n$.

	 We call a function $w:X\times X \to [0,\infty]$ a \emph{weight function}  on $X$ if $w$ is symmetric, i.e., $w(x,y)=w(y,x)$ for all $x,y \in X$, and zero on and only on the diagonal, i.e., $w(x,y)=0$ if and only if $ x=y $ for $x,y \in X$. 

Every weight $w$ on $X$ induces a pseudo metric $\delta_w:X\times X \to [0,\infty]$ on $X$ via
\[
\delta_w(x,y) := \inf    \left\lbrace   \sum_{i=1}^n w(x_{i-1},x_i) \mid (x_0,\ldots,x_n) \mbox{ path from } x \mbox{ to } y  \right\rbrace .    
\]
A \emph{pseudo metric} is a symmetric map that is zero on the diagonal and satisfies the triangle inequality. In contrast to a metric we allow for the value $ \infty $ and do not ask for definiteness.

Observe that $\delta_w \leq w$ (i.e., $\delta_w(x,y) \leq w(x,y)$ for all $x,y\in X$) for all weights $w$. Let $ \delta $ be a pseudo metric on $ X $. A weight $w$ on $X$ is said to \emph{generate} $\delta$ if $\delta = \delta_w$.
It is not hard to see from the triangle inequality that every pseudo metric $\delta$ is also a weight and $\delta_\delta=\delta$. A pseudo metric $ \delta $ is said to be \emph{discrete} if $ (X,\delta) $ is a discrete  space.

A weight function is called \emph{locally finite} if for all $x \in X$ we have $\#\{y\in X \mid w(x,y) < \infty\} < \infty$ . The following weaker notion of local finiteness is crucial for the considerations of this paper.

\begin{defi}[Essential local finiteness]
A weight function is called \emph{essentially locally finite} if 
for all $x \in X$ and all $R>0$
$$ \#\{y\in X \mid w(x,y) <R\} < \infty .$$ 
\end{defi}
For a weight $w$ and $\gamma=(x_0,\ldots,x_n) \in \Pi(X)$, we denote the $w$-\emph{length} by $$ l_w(\gamma) :=\sum_{i=1}^n w(x_{i-1},x_i). $$
Similarly, for $\gamma=(x_0,x_{1},\ldots ) \in \Pi_\infty(X)$,  we set  $l_w(\gamma) :=\sum_{i=1}^\infty w(x_{i-1},x_i)$.

A  $w$-\emph{geodesic} is a path $\gamma\in \Pi(X)$  from $x$ to $y$ such that 
$$ l_w(\gamma)= \delta_w(x,y).  $$
An infinite path $\gamma=(x_0,x_{1},\ldots) \in \Pi_\infty (X)$ is called a $w$-\emph{geodesic} from $x_0$ if  $(x_0,\ldots,x_n)$ is $ w $-geodesic for all $n \in\N$.
Observe that every $w$-geodesic is $\delta_w$-geodesic. 
A weight $ w $ is  \emph{geodesically complete} if $ l_{w}(\gamma)=\infty $  for all infinite $ w $-geodesics $\gamma$.

For a given pseudo metric space $(X,\delta)$, we write for short, $l:=l_\delta$ and call every $\delta$-geodesic a \emph{geodesic}.  A pseudo metric space $(X,\delta)$ is called \emph{geodesically complete} if $l(\gamma) = \infty$ for all infinite geodesics $\gamma$.
For a pseudo metric space $(X,\delta)$, we introduce the \emph{geodesic weight} $w_\delta$ via
\[
w_\delta(x,y) :=  \begin{cases}
								 \delta(x,y) &\mbox{if } (x,y) \mbox{ is the only geodesic from } x \mbox{ to } y  \\
								 \infty      & \mbox{else}.
				  \end{cases}
\]
Finally, given a pseudo metric $ \de $ on $ X $ we denote the distance balls for $ x\in X $ and $ R\ge 0 $ by
$ B_{R}(x):=\{y\in X\mid \de(x,y)\leq R\}. $

\section{The Hopf-Rinow theorem}

The Hopf-Rinow theorem below is the main result of the paper.

\begin{thm}[Hopf-Rinow theorem] \label{t:main}
	Let $(X,\delta)$ be a pseudo metric space. Then, there exists a weight generating $\delta$. Let $w$ be such a weight. Then the following statements are equivalent:
	\begin{enumerate}
		\item[(i)] $\#B_R(x) < \infty$ for all $x \in X$ and $R>0$.
		\item[(ii)] The space $(X,\delta)$ is complete and $w$ is essentially locally finite.
		\item[(iii)] The space $(X,\delta)$ is geodesically complete and the geodesic weight $w_\delta$ is essentially locally finite.
		\item [(iv)] The weight $ w $ is geodesically complete and  essentially locally finite.
	\end{enumerate}
	If one of these conditions holds, then $ \de $ is discrete and   for any $ x,y\in X $, there exists a $ w $-geodesic. Furthermore, 
	\[
	w_\delta(x,y) = \sup\{\widetilde w(x,y)\mid \mbox{$ \widetilde w $ weight such that } \delta_{\widetilde w} = \delta\}, \]
	$  \mbox{ for all } x,y \in X $ and $w_\delta$ generates $\delta$.
\end{thm}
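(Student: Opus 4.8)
\emph{Overview.} The plan is to prove a cycle of implications. The easy half, (i) $\Rightarrow$ (ii), (iii), (iv), comes straight from finiteness of balls; the substance is the converse, which I would funnel through the single implication (iv) $\Rightarrow$ (i), reducing (ii) and (iii) to (iv). Two small facts are used throughout. First, essential local finiteness of $w$ already forces $\delta = \delta_w$ to be a metric: if $\delta(x,y) = 0$ with $x \ne y$, take paths $P_n$ from $x$ to $y$ with $l_w(P_n) \to 0$; since $P_n(1) \in \{z : w(x,z) < 1\}$, a finite set, some value $z$ recurs, and then $w(x,z) \le l_w(P_n) \to 0$ forces $z = x$, a contradiction. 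Second, if $\#B_r(x) < \infty$ for some $r > \delta(x,y)$, then a $w$-geodesic from $x$ to $y$ exists, since all paths from $x$ to $y$ of $w$-length below $r$ stay inside the finite set $B_r(x)$ and are therefore finite in number. Also, the existence of a generating weight is trivial, as $\delta$ itself generates $\delta$.

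\emph{The easy implications.} Assuming (i): a Cauchy sequence eventually sits in one finite ball and hence converges, so $(X,\delta)$ is complete; an infinite $w$-geodesic, or $\delta$-geodesic, of finite length has nondecreasing partial lengths and so would lie in a single finite ball, impossible by injectivity, so $w$ and $\delta$ are geodesically complete; and $\{y : w(x,y) < R\}$ and $\{y : w_\delta(x,y) < R\}$ are contained in $B_R(x)$, hence finite, so $w$ and $w_\delta$ are essentially locally finite.

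\emph{The core implication (iv) $\Rightarrow$ (i).} This is the analogue of ``metrically complete $\Rightarrow$ proper'', and I expect it to be the main obstacle. Suppose $w$ is essentially locally finite and geodesically complete but $\#B_R(x) = \infty$ for some $x,R$; the goal is to build an infinite $w$-geodesic of finite length. Set $\rho := \sup\{r : \#B_r(x) < \infty\} \in [0,\infty)$. If the open ball $\{y : \delta(x,y) < \rho\}$ is infinite, each of its infinitely many points $y$ carries a $w$-geodesic $\gamma_y$ from $x$ with $l_w(\gamma_y) < \rho$; since $w(x,\gamma_y(1)) < \rho$, essential local finiteness leaves only finitely many choices for $\gamma_y(1)$, so infinitely many $\gamma_y$ share their first edge, and iterating this pigeonhole step gives a nested chain of common prefixes --- each a $w$-geodesic, being a prefix of some $\gamma_y$ --- whose union is an infinite $w$-geodesic of length $\le \rho$. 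If instead that open ball is finite, then for each $n$ there are infinitely many $y$ with $\delta(x,y) \in [\rho, \rho + 1/n]$; picking such $y_n$ pairwise distinct and $w$-paths $\gamma_n$ from $x$ to $y_n$ with $l_w(\gamma_n) < \rho + 2/n$, essential local finiteness lets me pass to a subsequence with a common second vertex $z$, and the chain $\rho \le \delta(x,y_n) \le \delta(x,z) + \delta(z,y_n) \le l_w(\gamma_n) < \rho + 2/n$ pins down $\delta(x,z) = w(x,z) > 0$ together with $\delta(z,y_n) < \rho - w(x,z) + 2/n$. Iterating the construction at $z$ --- whose threshold has dropped by $w(x,z)$ --- yields nested common prefixes that are genuine $w$-geodesics, precisely because the far points $y_n$ force the partial lengths to equal the distances, and their union is again an infinite $w$-geodesic of length $\le \rho$. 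Either way this contradicts geodesic completeness, so (i) holds.

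\emph{Reducing (ii) and (iii), and the closing assertions.} For (ii) $\Rightarrow$ (iv): essential local finiteness is given, and for geodesic completeness of $w$, note $\delta$ is a metric by the first fact, so an infinite $w$-geodesic of finite length is Cauchy and converges to some $z$ by completeness, with $z$ distinct from all tail points $x_n$; taking $w$-paths from $z$ to $x_n$ of length $< 2\delta(z,x_n) \to 0$ and a common second vertex $v$ along a subsequence gives $0 \le \delta(v,x_n) < 2\delta(z,x_n) - w(z,v) < 0$ for large $n$, absurd. For (iii) I would show that $w_\delta$ is itself a generating weight meeting the hypotheses of (iv), then apply the core implication with $w_\delta$ in place of $w$: first $\delta_{w_\delta} = \delta$, proved by repeatedly subdividing any pair $(x,y)$ with $w_\delta(x,y) = \infty$ at an intermediate vertex $z$ with $\delta(x,z) + \delta(z,y) = \delta(x,y)$ --- the resulting binary tree is finite, since an infinite branch would, via K\"onig's lemma, assemble into an infinite $\delta$-geodesic of finite length, contradicting geodesic completeness of $\delta$; then $\delta$ is a metric by the first fact, and $w_\delta$ is geodesically complete because on a $w_\delta$-geodesic the termwise bound $\delta \le w_\delta$ and equality of totals force $l_{w_\delta} = l_\delta$, making it a $\delta$-geodesic of the same length. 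Finally, under (i): $\delta$ is discrete (the finite ball $B_1(x)$ has all nonzero distances bounded below) and $w$-geodesics exist (second fact); $w_\delta$ generates $\delta$ because, among the finitely many $\delta$-geodesics from $x$ to $y$, one with the maximal number of edges must have every edge the unique geodesic between its endpoints --- otherwise replacing such an edge by a detour geodesic yields a $\delta$-geodesic with strictly more edges (the result stays a path, since a shortcut would undercut $\delta(x,y)$) --- so its $w_\delta$-length equals its $\delta$-length $= \delta(x,y)$; and any weight $\widetilde w$ generating $\delta$ satisfies $\widetilde w \le w_\delta$, since $\widetilde w(x,y) > \delta(x,y)$ would exhibit $\delta(x,y)$ as a limit of lengths of paths with at least two edges lying inside the finite ball $B_{\delta(x,y)+1}(x)$, one of which recurs and is a $\delta$-geodesic different from $(x,y)$, forcing $w_\delta(x,y) = \infty$. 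Combining the last two points gives the supremum formula, with $w_\delta$ generating $\delta$.
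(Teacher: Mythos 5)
Your proposal is correct in substance but takes a genuinely different route from the paper. The paper runs the cycle (i) $\Rightarrow$ (ii) $\Rightarrow$ (iii) $\Rightarrow$ (i) and attaches (iv) via (ii) $\wedge$ (iii) $\Rightarrow$ (iv) and (iv) $\Rightarrow$ (ii), with the combinatorial work isolated in the reusable Lemma~\ref{Lemma path} (infinitely many bounded-length paths from a fixed vertex yield, by pigeonhole/K\"onig, an infinite path of bounded length) together with two geodesic-existence lemmas; it never proves (iv) $\Rightarrow$ (i) directly. You instead make (iv) $\Rightarrow$ (i) the hub, proving it by a two-case analysis around $\rho=\sup\{r:\#B_r(x)<\infty\}$: your first case is essentially Lemma~\ref{Lemma path} applied to genuine geodesics, and your second case (near-minimizing paths to far targets, with the squeeze forcing common prefixes to be geodesics) reproduces the mechanism of the paper's Lemma~\ref{lf&geodcompl=>complete}. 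Your (ii) $\Rightarrow$ (iv) re-proves discreteness inline where the paper quotes Lemma~\ref{Lemma discrete}, and your (iii) $\Rightarrow$ (i) replaces the paper's ``maximal geodesic'' construction (nested ranges plus Bolzano--Weierstrass) by a binary-subdivision tree plus K\"onig's lemma---the same use of geodesic completeness of $\delta$, packaged differently---before transferring geodesic completeness to $w_\delta$ and invoking your core implication; under (i) your finite-ball arguments (the maximal-edge-count geodesic for ``$w_\delta$ generates $\delta$'' and the recurrence argument for $\widetilde w\le w_\delta$) are simpler than the paper's route through its lemmas. What your approach buys is a single hard implication and no free-standing lemmas; what the paper's buys is reusability of Lemma~\ref{Lemma path} (used again in Section~\ref{resistance}) and shorter verifications elsewhere. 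Two compressed points deserve the same care that the paper itself only gestures at: the injectivity of the path assembled from your subdivision tree (it needs that all subdivision points are ``aligned'', i.e.\ $\delta(p,q)=|\delta(x,p)-\delta(x,q)|$, which does hold, plus definiteness of $\delta$), and the degenerate cases $\delta(x,y)=\infty$ and non-definite pseudo metrics, which both your argument and the paper's treat implicitly.
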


We start the proof with a basic lemma which shows that essential local finiteness implies discreteness.

\begin{lemma} \label{Lemma discrete}
	Let $w$ be an essentially locally finite weight on $X$. Then, $\delta_w$ is discrete. 
\end{lemma}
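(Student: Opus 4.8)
The plan is to prove discreteness by exhibiting, for each vertex $x$, a uniform positive lower bound for $\delta_w(x,y)$ over all $y\neq x$, which immediately makes $\{x\}$ open in $(X,\delta_w)$. The key observation is that the first edge of any nontrivial path out of $x$ already contributes a controlled amount to its length, and essential local finiteness is exactly what keeps that amount bounded away from $0$.

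First I would fix $x\in X$ and apply essential local finiteness with $R=1$ to conclude that $F:=\{z\in X\mid w(x,z)<1\}$ is finite. Since $w$ vanishes only on the diagonal, $w(x,z)>0$ for every $z\in F\setminus\{x\}$, so the number $\eta:=\min\bigl(\{1\}\cup\{w(x,z)\mid z\in F\setminus\{x\}\}\bigr)$ is the minimum of finitely many strictly positive reals and hence satisfies $\eta>0$. By construction $w(x,z)\geq\eta$ for \emph{every} $z\neq x$: if $z\in F$ this is built into the definition of $\eta$, and if $z\notin F$ then $w(x,z)\geq 1\geq\eta$.

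Next I would bound $\delta_w(x,y)$ from below for $y\neq x$. Every path $\gamma=(x_0,\ldots,x_n)$ from $x$ to $y$ has $n\geq 1$, and injectivity forces $x_1\neq x_0=x$, so $l_w(\gamma)=\sum_{i=1}^n w(x_{i-1},x_i)\geq w(x,x_1)\geq\eta$. Taking the infimum over all such paths yields $\delta_w(x,y)\geq\eta>0$. Therefore $\{y\in X\mid\delta_w(x,y)<\eta\}=\{x\}$, so $\{x\}$ is open; as $x\in X$ was arbitrary, the topology induced by $\delta_w$ is discrete.

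I do not expect a real obstacle here: the proof is essentially the single remark above. The only subtlety worth flagging is the degenerate case $F=\{x\}$ (every neighbour of $x$ at $w$-weight $\geq 1$), which is precisely why the constant $1$ is thrown into the minimum defining $\eta$ — this guarantees the minimum is over a nonempty set and stays strictly positive in all cases.
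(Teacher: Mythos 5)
Your proposal is correct and follows essentially the same argument as the paper: use essential local finiteness at radius $1$ to extract a uniform positive lower bound on $w(x,\cdot)$ off the diagonal, then bound any path's length from below by its first edge. You merely spell out the construction of the constant (the paper asserts the existence of $\eps>0$ directly), so there is nothing to add.
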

\begin{proof}
Let $x \in X$. Since $\#\{y\in X\mid w(x,y) < 1\} < \infty$ and $ w $ vanishes only on the diagonal, there exists $\eps>0$ such that $w(x,y) > \eps$ for all $y \in X$.
Let $y \in X$, let $n \in \N$ and let $(x_0,\ldots,x_n)$ be a path from $x$ to $y$. Then, $\sum_{i=1}^n w(x_{i-1},x_i) \geq w(x,x_1) > \eps$.
Hence, $\delta_w(x,y) \geq \eps$ and thus, $B_{\eps/2}(x) = \{x\}$. Since $x$ was chosen arbitrary, this implies discreteness of $(X,\delta)$.
\end{proof}

The next lemma shows another consequence of essential local finiteness. Specifically, every infinite set of finite paths with bounded lengths gives rise to an infinite path of bounded length.

\begin{lemma}  \label{Lemma path}
	
	Let $w$ be an essentially locally finite weight on $X$ generating a metric $\delta=\delta_w$, let $x \in X$ and let $R>0$.  Let $\mathcal{P} \subseteq \{\gamma \in \Pi(X)\mid \gamma(0) = x, l_w(\gamma) < R\}$ be an infinite set of finite paths with fixed starting point and bounded $w$-length. Then, there exists an infinite path $\gamma_\infty \in \Pi_\infty(X)$ such that for every $n\in \N$ there exist infinitely many paths $\gamma \in \mathcal P$ with $\gamma(k)=\gamma_\infty(k)$, for $k=0,\ldots,n$, and $l_w(\gamma_\infty) \leq R$.
\end{lemma}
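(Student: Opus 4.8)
The plan is to construct $\gamma_\infty$ vertex-by-vertex by a König-type pruning argument, using essential local finiteness to guarantee that at each stage infinitely many paths of $\mathcal P$ agree on the partial path built so far. First I would note that since every $\gamma\in\mathcal P$ starts at $x=\gamma(0)$, set $\mathcal P_0:=\mathcal P$ and $\gamma_\infty(0):=x$. Inductively, suppose $\gamma_\infty(0),\ldots,\gamma_\infty(n)$ have been chosen and $\mathcal P_n\subseteq\mathcal P$ is infinite with $\gamma(k)=\gamma_\infty(k)$ for $k=0,\ldots,n$ and all $\gamma\in\mathcal P_n$. Among the paths in $\mathcal P_n$, infinitely many have length $>n$ (since a path of length $\le n$ has only finitely many possible underlying vertex sequences, being injective on $\{0,\ldots,n\}$; more carefully, only finitely many $\gamma\in\mathcal P_n$ can have domain $\{0,\ldots,m\}$ with $m\le n$, so infinitely many extend past step $n$). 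For each such $\gamma$, the vertex $\gamma(n+1)$ satisfies $w(\gamma_\infty(n),\gamma(n+1))\le l_w(\gamma)<R$, so $\gamma(n+1)$ lies in the set $\{y\in X\mid w(\gamma_\infty(n),y)<R\}$, which is finite by essential local finiteness. By the pigeonhole principle there is a vertex $z$ such that $\gamma(n+1)=z$ for infinitely many of these $\gamma$; set $\gamma_\infty(n+1):=z$ and let $\mathcal P_{n+1}$ be this infinite subfamily.

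Next I would check injectivity of $\gamma_\infty$: since each $\gamma\in\mathcal P_{n+1}$ is injective and agrees with $\gamma_\infty$ on $\{0,\ldots,n+1\}$, the initial segment $(\gamma_\infty(0),\ldots,\gamma_\infty(n+1))$ has pairwise distinct entries; as this holds for all $n$, $\gamma_\infty\in\Pi_\infty(X)$. The agreement property in the statement is immediate from the construction, because $\mathcal P_n$ is infinite and every $\gamma\in\mathcal P_n$ satisfies $\gamma(k)=\gamma_\infty(k)$ for $k=0,\ldots,n$. Finally, for the length bound, fix $n$ and pick any $\gamma\in\mathcal P_n$; then $\sum_{i=1}^{n}w(\gamma_\infty(i-1),\gamma_\infty(i))=\sum_{i=1}^n w(\gamma(i-1),\gamma(i))\le l_w(\gamma)<R$. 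Letting $n\to\infty$ gives $l_w(\gamma_\infty)=\sum_{i=1}^\infty w(\gamma_\infty(i-1),\gamma_\infty(i))\le R$.

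The only genuinely delicate point is the claim that at each stage infinitely many paths of $\mathcal P_n$ genuinely extend past index $n$; this is where one must use that paths are injective (so that for fixed domain length there are only finitely many distinct vertex sequences emanating from $x$ through a finite reachable set — indeed essential local finiteness bounds, step by step, the number of choices), ensuring the induction does not stall at a finite path. Everything else is bookkeeping: the pigeonhole step is exactly where essential local finiteness of $w$ enters, and the passage to the limit length uses only monotone convergence of nonnegative partial sums.
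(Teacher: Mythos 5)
Your proof is correct and follows essentially the same König-type inductive pruning argument as the paper: build $\gamma_\infty$ step by step, using essential local finiteness to make the set of candidate next vertices finite and then applying pigeonhole to keep an infinite subfamily of $\mathcal P$ agreeing with the partial path. In fact your per-step justification (each candidate $\gamma(n+1)$ lies in the finite set $\{y \mid w(\gamma_\infty(n),y)<R\}$, and at most finitely many paths of $\mathcal P_n$ fail to extend past index $n$) is slightly more careful than the paper's blanket assertion that the whole vertex set of $\mathcal P$ is finite, and you additionally make the injectivity of $\gamma_\infty$ and the passage to the length bound explicit.
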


\begin{proof} Due to countabilty, we can assume wellordering of $X$ without applying the axiom of choice, and thus, every non-empty subset of $X$ possesses a minimum.

We define $\gamma_{\infty}$ inductively starting with $ \gm_{\infty}(0)=x $ and
$$\gamma_\infty(n+1) :=  \min\big\{y \in X\mid  \#\{\gamma \in \mathcal P_n \mid \gamma(n+1)=y \} =\infty  \big\}$$
for $ n\in\N $  with
$$
\mathcal P_n :=  \{\gamma \in \mathcal P\mid \gamma(i) = \gamma_\infty(i) \mbox{ for } i=0,\ldots,n \},
$$
whenever the set from which the minimum is taken is non-empty. Indeed, this is always the case as we show now: 

We know that the set of vertices which occur in paths of $ \mathcal{P} $ is finite  since $w(\gamma(i-1),\gamma(i)) < R$ for all $\gamma \in \mathcal P$ and since $w$ is essentially locally finite. But  every partition of an infinite set into finitely many subsets has an infinite subset.
This implies the existence of some $y$ such that there are infinitely many paths in $\mathcal{P}$ starting with $(\gamma_\infty(0),\ldots,\gamma_\infty(n),y)$ under the assumption of the existence of infinitely many paths in $\mathcal P$  starting with $(\gamma_\infty(0),\ldots,\gamma_\infty(n))$. By induction, we obtain that for every $n\in\N$, there exist infinitely many paths in $\mathcal P$ starting with $(\gamma_\infty(0),\ldots,\gamma_\infty(n))$. Since the $w$-length of all paths in $\mathcal P$ is upper bounded by $R$, we conclude $l_w(\gamma_\infty) \leq R$.
\end{proof}

\begin{lemma} \label{Lemma geodesic}
	Let $w$ be an essentially locally finite weight on $X$ such that $(X,\delta_w)$ is complete. Then, for all $x,y \in X$, there exists a $w$-geodesic from $x$ to $y$.
\end{lemma}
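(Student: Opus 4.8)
The plan is to produce a $w$-geodesic from $x$ to $y$ by ruling out ``escape to infinity'' along a minimizing family of paths, using Lemmas~\ref{Lemma discrete} and~\ref{Lemma path}. First I would dispose of the case $\delta_w(x,y)=\infty$: since $\delta_w\le w$, this forces $w(x,y)=\infty$, so for $x\ne y$ the single-edge path $(x,y)$ has $l_w((x,y))=w(x,y)=\infty=\delta_w(x,y)$ and is already a $w$-geodesic (for $x=y$ the trivial path $(x)$ works). So from now on set $d:=\delta_w(x,y)<\infty$.

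Next I would consider the family
\[
\mathcal P:=\{\gamma\in\Pi(X)\mid \gamma \text{ is a path from } x \text{ to } y,\ l_w(\gamma)<d+1\}.
\]
By the definition of $\delta_w$ as an infimum, $\mathcal P\neq\emptyset$, and in fact for every $\eps\in(0,1)$ it contains a path of $w$-length $<d+\eps$. The crux is to show $\mathcal P$ is \emph{finite}. Suppose not. Then, since $w$ is essentially locally finite and $\delta_w$ is discrete by Lemma~\ref{Lemma discrete}, Lemma~\ref{Lemma path} applied with $R=d+1$ yields an infinite path $\gamma_\infty=(z_0,z_1,\ldots)\in\Pi_\infty(X)$ with $l_w(\gamma_\infty)\le d+1<\infty$. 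The convergence of the series $\sum_{i\ge1}w(z_{i-1},z_i)$ makes its tails vanish, and since $(z_n,z_{n+1},\ldots,z_m)$ is itself a path for $n\le m$ we get $\delta_w(z_n,z_m)\le\sum_{i=n+1}^m w(z_{i-1},z_i)\to 0$, so $(z_n)_n$ is $\delta_w$-Cauchy. Completeness of $(X,\delta_w)$ then gives $z_n\to z$ for some $z\in X$, while discreteness (Lemma~\ref{Lemma discrete}) forces $(z_n)_n$ to be eventually constant, contradicting injectivity of $\gamma_\infty$. Hence $\mathcal P$ is finite.

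Finally, $\mathcal P$ being finite and nonempty, $\min_{\gamma\in\mathcal P}l_w(\gamma)$ is attained; since every path from $x$ to $y$ has $w$-length $\ge d$ and $\mathcal P$ contains paths of $w$-length arbitrarily close to $d$, this minimum equals $d=\delta_w(x,y)$, so a minimizing path is the desired $w$-geodesic. The one genuinely nontrivial step is the finiteness of $\mathcal P$, and that is precisely where both hypotheses enter in tandem: essential local finiteness powers Lemma~\ref{Lemma path} to distill an infinite bounded-length path out of an infinite family, and completeness together with discreteness is exactly what forbids such a path; the rest is routine manipulation of infima.
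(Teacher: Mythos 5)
Your proposal is correct and follows essentially the same route as the paper: the key step in both is to show that an infinite family of paths from $x$ to $y$ of $w$-length $<\delta_w(x,y)+1$ would, via Lemma~\ref{Lemma path}, produce an injective infinite path of finite length, which is Cauchy and hence convergent by completeness, contradicting the discreteness from Lemma~\ref{Lemma discrete}. Your packaging (first proving $\mathcal P$ is finite, then taking the attained minimum, plus the explicit treatment of $\delta_w(x,y)=\infty$) is only a minor reorganization of the paper's contradiction argument.
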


\begin{proof}
Let $x,y \in X$. Suppose, there is no $w$-geodesic from $x$ to $y$. Then $l_w(\gamma)>\delta_w(x,y)$ for all paths $\gamma$ from $x$ to $y$. But there is a sequence of paths $(\gamma_i)$ with $l_w(\gamma_i) {\longrightarrow} \delta_w(x,y)$ as $ {i \to \infty} $.
Hence, 
\[ 
\mathcal P :=\{\gamma \in \Pi(X) \mid   \mbox{ path from } x \mbox{ to }y \mbox{ such that } \,l_w(\gamma) < \delta_w(x,y)+1\}
\]
satisfies $\# \mathcal P = \infty$. 
Now,  Lemma~\ref{Lemma path}  yields the existence of an infinite path $\gamma_\infty$ with finite $w$-length $l_w(\gamma_\infty) \leq \delta_w(x,y) + 1$.  This implies that $\gamma_\infty$ is a Cauchy-sequence. Due to completeness, $\gamma_\infty$ converges to some limit in $ X $ which is not a discrete point since $ \gm_{\infty} $ is injective. This is a contradiction to Lemma~\ref{Lemma discrete} and thus, there exists a $w$-geodesic from $x$ to $y$.
\end{proof}

\begin{lemma}\label{lf&geodcompl=>complete}
	Let $ w $ be an essentially locally finite weight on $ X $ which is geodesically complete. Then, for every $ x,y\in X $ there is a $ w$-geodesic from $ x $ to $ y $.
\end{lemma}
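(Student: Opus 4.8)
The plan closely follows the proof of Lemma~\ref{Lemma geodesic}, only replacing its concluding appeal to metric completeness by an appeal to geodesic completeness of $w$. One may assume $x\neq y$ and $d:=\delta_w(x,y)<\infty$, since otherwise the one-edge path $(x,y)$ already is a $w$-geodesic. Assume, for a contradiction, that there is no $w$-geodesic from $x$ to $y$; then the infimum $d$ defining $\delta_w(x,y)$ is not attained, so one can pick paths $\gamma_i\in\Pi(X)$ from $x$ to $y$ with pairwise distinct $w$-lengths and $l_w(\gamma_i)\to d$, and, discarding finitely many, with $l_w(\gamma_i)<d+1$ for all $i$. Then $\mathcal P:=\{\gamma_i\mid i\in\N\}$ is an infinite family of distinct paths contained in $\{\gamma\in\Pi(X)\mid\gamma(0)=x,\ l_w(\gamma)<d+1\}$, and Lemma~\ref{Lemma path} (with $R=d+1$) yields an infinite path $\gamma_\infty\in\Pi_\infty(X)$ with $l_w(\gamma_\infty)\le d+1<\infty$ such that, for every $n$, infinitely many $\gamma_i$ agree with $\gamma_\infty$ on $\{0,\dots,n\}$.

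The key step is to show that $\gamma_\infty$ is a $w$-geodesic. Fix $n$, set $z:=\gamma_\infty(n)$ and $\alpha_n:=(\gamma_\infty(0),\dots,\gamma_\infty(n))$. Every $\gamma_i$ agreeing with $\gamma_\infty$ up to $n$ decomposes as $\alpha_n$ followed by a path from $z$ to $y$, so $l_w(\gamma_i)\ge l_w(\alpha_n)+\delta_w(z,y)$; since there are infinitely many such $i$ and $l_w(\gamma_i)\to d$, we get $d\ge l_w(\alpha_n)+\delta_w(z,y)$. Together with the triangle inequality and $\delta_w(x,z)\le l_w(\alpha_n)$ this gives
\[
d \le \delta_w(x,z)+\delta_w(z,y) \le l_w(\alpha_n)+\delta_w(z,y) \le d ,
\]
so all these quantities agree; since $\delta_w(z,y)\le d<\infty$, it may be cancelled, leaving $l_w(\alpha_n)=\delta_w(x,z)$, i.e.\ $\alpha_n$ is a $w$-geodesic. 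As $n$ was arbitrary, $\gamma_\infty$ is an infinite $w$-geodesic from $x$ with $l_w(\gamma_\infty)\le d+1<\infty$, contradicting geodesic completeness of $w$. Hence a $w$-geodesic from $x$ to $y$ exists.

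The step I anticipate as the main obstacle is producing $\gamma_\infty$ with the right property: it is crucial to apply Lemma~\ref{Lemma path} to a length-\emph{minimising} family of paths rather than a merely length-bounded one, because only then do the lengths of the paths running through $\alpha_n$ still accumulate at $d$, which is exactly what forces $\alpha_n$ to be geodesic. A minor wrinkle is that Lemma~\ref{Lemma path} is stated for a weight generating a metric whereas here $\delta_w$ is a priori only a pseudo metric; this is harmless, since one may either restrict attention to $\{z\in X\mid\delta_w(x,z)<\infty\}$, on which $\delta_w$ is a genuine metric and $w$ remains essentially locally finite and geodesically complete, or simply observe that metricity is never used in the proof of Lemma~\ref{Lemma path}.
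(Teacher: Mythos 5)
Your proposal is correct and follows essentially the same route as the paper: apply Lemma~\ref{Lemma path} to a length-minimising family of paths from $x$ to $y$, then use the triangle-inequality sandwich to show every initial segment of $\gamma_\infty$ is a $w$-geodesic, contradicting geodesic completeness. The only differences are cosmetic — you force $\mathcal P$ to be infinite by choosing paths of pairwise distinct lengths instead of the paper's case distinction on $\#\mathcal P$, and you replace the paper's $\eps$-argument by an infimum/cancellation argument, while handling the trivial cases $x=y$ and $\delta_w(x,y)=\infty$ a bit more explicitly.
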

\begin{proof}Let $ x,y\in X $ such that $ x\neq y $. Our aim is to find a $ w $-geodesic from $ x $ to $ y $. 
	To this end, let $ ( \gm_{j}) $ be a sequence of paths from $ x$ to $y $ such that
	\begin{align*}
	\lim_{j\to\infty}l_{w}( \gm_{j})=\de(x,y).
	\end{align*}

	Let $\mathcal P:=\{\gamma_j \mid j \in \N \}$.
	
	If $\#\mathcal P < \infty$, then $ \mathcal{P} $ obviously contains a $ w $-gedeosic from $ x $ to $ y $.
	
	If $\#\mathcal P = \infty$, then by Lemma~\ref{Lemma path} there is an infinite path $ \gm_{\infty} $ such that for every $ n\in\N $, there exist infinitely many $ \gm_{j}$ such that $ \gm_{j}(k)=\gamma_{\infty}(k) $ for all $ k=0,\ldots,n $. We fix $ n\in \N $ and let $ \eps>0 $. 
	Then, there exists $ \gm_{j} $  such that $ \gm_{j}(k)=\gamma_{\infty}(k) $ for all $ k=0,\ldots,n $ and $ l_{w}(\gm_{j}) \leq \de(x,y)+\eps $.

	 We estimate
	 \begin{align*}
	 \de(x,y)&\leq \de(\gm_\infty({0}),\gm_\infty({n}))+\de(\gm_\infty({n}),y)\\
	 &\leq  l_{w}((\gm_\infty({0}),\ldots,\gm_\infty({n})))+\de(\gm_\infty({n}),y)\\
	 &\leq l_{w}(\gm_{j})\\
	 &\leq \de(x,y)+\eps.
	 \end{align*}
	 Letting $ \eps \to 0$, we see that all inequalities turn into equalities and, therefore,
	 \begin{align*}
	 \de(\gm_\infty({0}),\gm_\infty({n}))= l_{w}((\gm_\infty({0}),\ldots,\gm_\infty({n}))).
	 \end{align*}

	 Hence, $ \gm_\infty $ is a $ w $-geodesic which has finite length. This contradicts the assumption of $ w $-geodesic completeness.
\end{proof}

\begin{proof}[Proof of Theorem~\ref{t:main}]
The existence of a weight $w$ that generates $\delta$ follows from $\delta_\delta=\delta$. Let $ w $ be such a weight.\smallskip

	(i) $\Rightarrow$ (ii). 	
The space  $(X,\delta)$ is complete since all balls are complete due to finiteness.
Since $\#B_R(x) < \infty$ for all $x \in X$ and $R>0$ and $w \geq \delta_w = \delta$, the weight $w$ is essentially locally finite.
 \smallskip

	(ii) $\Rightarrow$ (iii). 
We first show geodesic completeness.	
Suppose not. Then, there is an infinite geodesic $\gamma=(x_0,x_{1},\ldots)$ with $l(\gamma) < R$ for some $R>0$.
Then, $\gamma$ is a Cauchy sequence converging to some $x\in X$ due to completeness. But since $\gamma$ is injective, this implies that $x$ is not a discrete point. This is a contradiction to Lemma~\ref{Lemma discrete}. Hence, $X$ is geodesically complete.

Next, we show $w_\delta(x,y) \geq w(x,y)$ for all $x,y \in X$.
Let $x,y \in X$. Due to Lemma~\ref{Lemma geodesic}, there exists a $w$-geodesic $(x_0,\ldots,x_n)$ from $x=x_{0}$ to $y=x_{n}$. If $ n=1 $ then $w(x,y)=\delta(x,y)$  and if 
 $n\ge2$ then $w_\delta(x,y)=\infty$. Thus, $w_\delta(x,y)\in \{w(x,y),\infty\}$ which proves  $w_\delta(x,y) \geq w(x,y)$. We infer $$  \#\{y\in X\mid w_\delta(x,y) <R\} \leq \#\{y\in X\mid w(x,y) <R\} < \infty $$ for all $R>0$ by essential local finiteness of $ w $ which is assumed in (ii). This proves essential local finiteness of $w_\delta$.\smallskip

	(iii) $\Rightarrow$ (i). 
First, we show that $w_\delta$ generates $\delta$.  
To do so, we show that for all $x \neq y$, there exists a maximal $\delta$-geodesic $\gamma =(x_0,\ldots,x_n)$ from $x$ to $y$ in the sense that $(x_i,x_{i-1})$ is the only geodesic between $x_i$ and $x_{i-1}$ for $i=1,\ldots,n$.
Suppose not. 
Then, there is a sequence of geodesics $\gamma_k: \{0,\ldots,k\} \to X$ such that $\ran(\gamma_k) \subset \ran(\gamma_{k+1})$ for $k\geq 1$ where we denote the range of a function $f:A \to B$ by $\ran(f):=f(A)$.
The map $$ \Phi: \Gamma := \bigcup_{k\in\N} \ran (\gamma_k) \to [0, \delta(x,y)],\qquad z \mapsto \de(x,z)$$ is isometric since all $\gamma_k$ are geodesics. Since $\# \Gamma = \infty$, the range $\ran(\Phi)$ has an accumulation point due to the Bolzano-Weierstrass theorem and there exists a strictly monotone sequence $\left(r_k \right) $  in $ \ran(\Phi)$ converging to this accumulation point. Hence, $\left(\Phi^{-1}(r_k) \right)_{k=0}^\infty$ is an infinite geodesic of finite length. This is a contradiction to geodesic completeness and thus, there exists  a maximal $\delta$-geodesic $\gamma = (x_0,\ldots,x_n)$ from $x$ to $y$.
Moreover, maximality implies $w_\delta(x_i,x_{i-1}) = \delta(x_i,x_{i-1})$ for $i=1,\ldots,n$ and thus, $l_{w_\delta}(\gamma) = \delta(x,y)$. Consequently, $w_\delta$ generates $\delta$ since $x$ and $y$ were chosen arbitrarily. Moreover, for every $x,y \in X$, there exists a $w_\delta$-geodesic between $x$ and $y$.

Next, we prove $\#B_R(x) < \infty$ for all $x \in X$ and $R>0$.	
Suppose  $\# B_R(x) = \infty$ for some $R>0$, $x\in X$. 
Then, there are infinitely many $w_\delta$-geodesics $\gamma$ starting from $x$ with $w_\delta$-length $l_{w_\delta} (\gamma) \leq R$ since there exists a $w_\delta$-geodesic between $x$ and $y$ for all $y$. Thus, we can apply Lemma~\ref{Lemma path},and hence, there exists an infinite path $\gamma_\infty$ with $l_{w_\delta}(\gamma_\infty) \leq R$ such that $(\gamma_\infty(0),\ldots,\gamma_\infty(n))$ is $w_\delta$-geodesic for all $n$. Consequently, $\gamma_\infty$ is an infinite geodesic with finite length. This is a contradiction to geodesic completeness.\smallskip

(ii) $ \wedge $ (iii) $ \Rightarrow $ (iv): Essential local finiteness of $ w $ is immediate by (ii). Since every $ w $-geodesic is a $ \de_{w} $-geodesic, we infer that geodesic completeness assumed in (iii) implies  $ w $-geodesic completeness. \smallskip

(iv) $ \Rightarrow $ (ii): By Lemma~\ref{lf&geodcompl=>complete} for every $ x,y\in X $ there exists a $ w $-geodesic from $ x $ to $ y $. So, for any Cauchy-sequence $ (x_{k}) $  there are geodesics $ \gm_{k} $ from $ x_{0} $ to $ x_{k} $. If $ (x_{k}) $ does not admit a limit, then the set of these geodesics $\mathcal{P} :=\{\gm_{k}\} $ is infinite. Moreover,  $ (x_{k}) $ is bounded as a Cauchy-sequence, so, there exists an infinite $ w $-geodesic by Lemma~\ref{Lemma path} of finite length. This contradicts the geodesic completeness of $ w $.\smallskip

Lemma~\ref{Lemma discrete} implies discreteness of $\delta$ and Lemma\ref{Lemma geodesic} or Lemma~\ref{lf&geodcompl=>complete} show the existence of paths.

We still have to prove $w_\delta(x,y) = \sup\{\widetilde w(x,y)\mid\delta_{\widetilde w} = \delta\}$ for all $x,y \in X$. 
In the proof of (i) $\Rightarrow$ (ii) we have already shown that $\widetilde w$ is essentially locally finite if $\widetilde w$ generates $\delta$. Moreover, we showed
 $\widetilde w \leq w_\delta$ for all essentially locally finite $\widetilde w$ which generate $\delta$ in (ii) $\Rightarrow$ (iii). Furthermore, we have proven that $w_\delta$ generates $\delta$ in (iii) $\Rightarrow$ (i). Putting these three observations together, we obtain $w_\delta(x,y) = \sup\{\widetilde w(x,y)\mid \delta_{\widetilde w} = \delta\}$.  \smallskip
 
 This finishes the proof.	
\end{proof}
\section{Resistance metric}\label{resistance}
In this section we show that for a graph the resistance metric is a path metric induced by the graph structure if and only if  the graph is a block graph. Moreover, we can even characterize when the resistance metric is a path metric induced by the inverse edge weights.

Let $ b $ be a graph over a discrete set $ X $ which is a symmetric map $ b:X\times X\to[0,\infty) $ with zero diagonal that satisfies
\begin{align*}
\sum_{y\in X}b(x,y)<\infty,\qquad x\in X.
\end{align*}
Let $ C(X) $ be the real valued functions and let  $ Q:C(X)\to[0,\infty] $, 
\[
Q(f) := \frac 1 2 \sum_{x,y\in X} b(x,y)(f(x)-f(y))^2=\sum_{x\in X}\Gamma(f)(x),
\]
where
\begin{align*}
\Gamma(f)(x):= \frac 1 2 \sum_{y\in X} b(x,y)(f(x)-f(y))^2,\qquad x\in X.
\end{align*}

The following proposition is well known and can be shown by standard arguments found in \cite{LP16,JP,Kig01,GHKLW15}.

\begin{pro}[Resistance metric]\label{p:res}
The map $ R $ on $ X\times X $ given by
\[
R(x,y) := \sup \{(f(y)-f(x))^2  \mid   Q(f) = 1 \}= \sup_{f:X\to\R}  \frac{(f(y)-f(x))^{2}}{Q(f)}
\]
for $  x,y\in X $ is a metric. Moreover if $b$ is connected, the supremum in the definition is a unique maximum which is  assumed in a  function which is harmonic outside of $ x $ and $ y $, i.e., a function $ f $ satisfying $ Lf(v) := \sum_{w}b(v,w)(f(v)-f(w))=0 $ for all $ v\in X \setminus \{x,y \}$.	
\end{pro}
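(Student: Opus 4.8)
\textbf{Setup and strategy.} The plan is to prove the three assertions in turn: that $R$ is a metric, that the supremum is attained (when $b$ is connected), and that the maximizer is harmonic off $\{x,y\}$. Throughout I would work on the quotient space $C(X)/\text{constants}$, since $Q$ only sees differences of function values and $Q(f)=0$ forces $f$ to be constant on each connected component of $b$. For the metric property, symmetry of $R$ is immediate from the definition (swapping $x$ and $y$ changes $(f(y)-f(x))^2$ not at all), and $R(x,x)=0$ is clear. Positivity $R(x,y)>0$ for $x\neq y$ requires connectedness-type reasoning: if $x$ and $y$ lie in the same component one exhibits a test function (e.g.\ $f=\bone_{\{x\}}$, suitably normalized) with $(f(y)-f(x))^2>0$ and $0<Q(f)<\infty$; if they lie in different components one may take $f$ locally constant with a jump, giving $Q(f)=0$ and hence $R(x,y)=\infty$, which is still allowed for a (pseudo)metric valued in $[0,\infty]$. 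The triangle inequality is the one genuinely algebraic point: writing $\rho(x,y):=\sqrt{R(x,y)}$, one shows $\rho$ is a seminorm-type quantity, namely $|f(x)-f(y)|\le \rho(x,y)\sqrt{Q(f)}$ for all $f$, and then $|f(x)-f(z)|\le|f(x)-f(y)|+|f(y)-f(z)|\le(\rho(x,y)+\rho(y,z))\sqrt{Q(f)}$; taking the supremum over $f$ with $Q(f)=1$ gives $\rho(x,z)\le\rho(x,y)+\rho(y,z)$, and squaring-and-massaging is not needed because we can equivalently phrase $R$ via $\rho$. Actually the cleanest route is to prove $\sqrt{R}$ satisfies the triangle inequality directly from the variational formula.

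\textbf{Existence of the maximizer.} Assume $b$ connected and fix $x\neq y$. On the finite-codimension-free space $V:=\{f\in C(X)\mid f(x)=0\}$ modulo nothing (we have pinned the constant), $Q$ restricted to $\{Q<\infty\}$ is a nonnegative quadratic form, and the sublevel set $\{f\in V\mid Q(f)\le 1\}$ is the candidate compact set. The key is that the linear functional $f\mapsto f(y)-f(x)=f(y)$ is bounded on this set with bound $\sqrt{R(x,y)}$, which is exactly the finiteness $R(x,y)<\infty$ (following from connectedness, via a finite chain $x=z_0\sim z_1\sim\cdots\sim z_k=y$ with $b(z_{i-1},z_i)>0$, telescoping). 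Then one completes $\mathcal D:=\{Q<\infty\}/\text{const}$ to a Hilbert space with inner product $\langle f,g\rangle=\frac12\sum b(x,y)(f(x)-f(y))(g(x)-g(y))$; evaluation-difference $f\mapsto f(y)-f(x)$ is a bounded linear functional, hence represented by some $g\in\mathcal D$ via Riesz, and $g$ (normalized) is the maximizer. One must check the representative actually lies in $C(X)$, i.e.\ the abstract Hilbert space element is a genuine function — here connectedness and the chain estimate pin down all values from the differences.

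\textbf{Harmonicity of the maximizer.} Given the maximizer $f$ with $Q(f)=1$ realizing $R(x,y)=(f(y)-f(x))^2$, the standard Lagrange/first-variation argument applies: for any $\varphi\in C(X)$ with $\varphi(x)=\varphi(y)=0$ and $Q(\varphi)<\infty$, the function $t\mapsto \frac{(f(y)-f(x))^2}{Q(f+t\varphi)}$ has a critical point at $t=0$ (the numerator is constant in $t$), which forces $\frac{d}{dt}\big|_{0}Q(f+t\varphi)=0$, i.e.\ $\langle f,\varphi\rangle=0$, i.e.\ $\sum_{v,w}b(v,w)(f(v)-f(w))(\varphi(v)-\varphi(w))=0$; choosing $\varphi=\bone_{\{v\}}$ for $v\notin\{x,y\}$ yields $Lf(v)=0$. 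Uniqueness of the maximizer modulo constants then follows from strict convexity: two maximizers $f_1,f_2$ with the same boundary difference would, by harmonicity and a maximum-principle / energy argument ($Q(f_1-f_2)=\langle f_1-f_2, f_1-f_2\rangle$ and $f_1-f_2$ is harmonic off $\{x,y\}$ and vanishes in the relevant difference), be forced to agree up to a constant.

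\textbf{Main obstacle.} I expect the genuinely delicate step to be the existence/attainment part when $X$ is infinite: one must be careful that the Hilbert-space completion of $(\mathcal D,Q)$ embeds back into honest functions on $X$ (no "ideal" elements), and that the sublevel sets are weakly compact in the right sense. Connectedness is what saves this — it gives the pointwise control $|f(v)-f(x)|\le \sqrt{R(x,v)}\,\sqrt{Q(f)}$ needed to identify limits. Everything else (symmetry, the triangle inequality via the $\sqrt{R}$ reformulation, the first-variation computation) is routine, which is presumably why the paper defers to \cite{LP16,JP,Kig01,GHKLW15}.
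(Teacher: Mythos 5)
There is a genuine gap in the part of your argument that addresses the metric property. The proposition asserts that $R$ itself is a metric, i.e.\ $R(x,z)\le R(x,y)+R(y,z)$, and the paper really needs this statement (its Proposition on sharpness of the triangle inequality characterizes exactly when $R(x,z)=R(x,y)+R(y,z)$ holds). Your telescoping argument $|f(x)-f(z)|\le|f(x)-f(y)|+|f(y)-f(z)|$ only yields the triangle inequality for $\rho=\sqrt{R}$, and your remark that one ``can equivalently phrase $R$ via $\rho$'' is false: the square of a metric is in general not a metric (already $|x-y|^2$ on $\R$ fails), and $\rho$ satisfying the triangle inequality does not imply that $R=\rho^2$ does. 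The triangle inequality for the effective resistance is genuinely harder and cannot be obtained by Cauchy--Schwarz/telescoping alone; the standard proofs use the harmonic optimizers (equivalently, unit current flows). For instance: let $u$ be the maximizer for the pair $(x,y)$ and $v$ the maximizer for $(y,z)$, normalized so that $u,v$ are potentials of unit currents; by harmonicity and a maximum principle, $u$ attains its extreme values at $x$ and $y$ and $v$ at $y$ and $z$; superposing the two currents gives the unit current from $x$ to $z$ with potential $u+v$, whence $R(x,z)=(u+v)(x)-(u+v)(z)\le\bigl(u(x)-u(y)\bigr)+\bigl(v(y)-v(z)\bigr)=R(x,y)+R(y,z)$. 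Some such use of harmonicity/maximum principle (or the probabilistic commute-time identity) is unavoidable, and it is missing from your proposal.

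For calibration: the paper itself gives no proof of this proposition but defers to \cite{LP16,JP,Kig01,GHKLW15}, so the benchmark is the standard argument there. Your treatment of existence, uniqueness and harmonicity of the maximizer is essentially that standard route and is fine as a sketch: the energy space of finite-energy functions modulo constants is a Hilbert space (completeness follows from the pointwise bound $|f(v)-f(x)|\le\sqrt{R(x,v)Q(f)}$, with $R(x,v)<\infty$ by connectedness and a chain estimate), the functional $f\mapsto f(y)-f(x)$ is bounded there, Riesz representation produces the maximizer, and testing with $\varphi=\mathbf{1}_{\{v\}}$ (which has finite energy since $\sum_w b(v,w)<\infty$) gives $Lf(v)=0$ for $v\notin\{x,y\}$, with uniqueness (up to the obvious sign and additive constant) from strict convexity. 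The only missing ingredient of substance is the triangle inequality for $R$ itself, and that is precisely the point where the harmonicity statement you prove at the end must be fed back into the metric claim.
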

The metric $ R $ introduced in the proposition above is called the \emph{resistance metric}. Furthermore, let $ \de_{1/b} $ be the path metric generated by the weight $ 1/b $. If $b$ is a tree, see e.g. \cite[Lemma~8.1]{GHKLW15}, then
\begin{align*}
R=\de_{1/b}.
\end{align*}
So, it is a natural question whether there are other situations when $ R $ is a path metric induced by the graph structure. 
To answer this question, we first characterize sharpness of the triangle inequality of the resistance metric.

\begin{pro}\label{thm:resTriangle}
	Let $b$ be a graph over $ X $ and let $x,y,z \in X$. The following statements are equivalent:
	\begin{enumerate}
		\item[(i)] $ R(x,z) = R(x,y) + R(y,z). $
		\item[(ii)] All paths from $x$ to $z$ pass through $y$.
	\end{enumerate} 
\end{pro}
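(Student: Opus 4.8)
The plan is to prove the two implications separately, using the variational characterization of $R$ from Proposition~\ref{p:res} throughout. The key tool is that $R(x,y)$ is the reciprocal of the effective conductance between $x$ and $y$, and that the optimal function is harmonic off $\{x,y\}$; equivalently, by the standard duality, $R(x,y) = \inf\{ \text{energy dissipation of a unit flow from } x \text{ to } y\}$ (Thomson's principle). I would actually work with whichever of the two descriptions is cleanest for each direction.

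\textbf{(ii) $\Rightarrow$ (i).} Suppose every path from $x$ to $z$ passes through $y$. This means $y$ is a cut vertex separating $x$ from $z$: removing $y$ disconnects the component of $x$ from the component of $z$. Let $f$ be the optimal function for $R(x,z)$, normalized with $f(x)=0$, $f(z)=R(x,z)$, harmonic on $X\setminus\{x,z\}$. The idea is that $f$ restricted to the $x$-side (vertices reachable from $x$ without passing through $y$, together with $y$) is harmonic off $\{x,y\}$, and on the $z$-side harmonic off $\{y,z\}$, because the defining sum $Lf(v)=0$ only involves neighbours of $v$, which all lie on the same side. So $f$ is, up to the affine normalization, simultaneously the optimizer for the pair $(x,y)$ on the $x$-side and for $(y,z)$ on the $z$-side. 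Then a computation with $Q(f) = \sum_v \Gamma(f)(v)$, splitting the vertex sum into the $x$-side and the $z$-side (with $y$ counted once; its $\Gamma$-term splits along the two sides since its neighbours do), gives $Q(f) = Q_{\text{$x$-side}}(f) + Q_{\text{$z$-side}}(f)$, and each piece equals $(f(y)-f(x))^2/R(x,y)$ resp. $(f(z)-f(y))^2/R(y,z)$ by optimality. Setting $Q(f)=1$ and writing $a = f(y)-f(x)$, $b=f(z)-f(y)$, $a+b = R(x,z)$, one gets $a^2/R(x,y) + b^2/R(y,z) = 1$ together with $a^2/R(x,y)$ being the value one would plug in; carrying this through yields $R(x,z) = R(x,y)+R(y,z)$. (Concretely: the minimum of $a^2/R(x,y)+b^2/R(y,z)$ subject to $a+b=R(x,z)$ is $R(x,z)^2/(R(x,y)+R(y,z))$, and this must equal $1$, while also $R(x,z)^2/R(x,z) = R(x,z)$ is what the joint optimizer realizes — equality of these forces the claim.)

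\textbf{(i) $\Rightarrow$ (ii).} I would argue by contraposition: assume there is a path from $x$ to $z$ avoiding $y$, and show the triangle inequality is strict. Here the flow (dual) description is more convenient. By Thomson's principle, $1/R(x,y)$ is the max energy, equivalently $R(x,y)$ is realized by the minimal-dissipation unit flow. Take optimal unit flows $\theta_1$ from $x$ to $y$ and $\theta_2$ from $y$ to $z$; their sum $\theta_1+\theta_2$ is a unit flow from $x$ to $z$, and its dissipation is at most $\mathcal{E}(\theta_1)+\mathcal{E}(\theta_2) = R(x,y)+R(y,z)$ — with equality only if $\theta_1,\theta_2$ have disjoint edge supports. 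But if there is an $x$–$z$ path $P$ avoiding $y$, one can perturb: push a small amount of flow $\varepsilon$ along $P$ and correspondingly reduce flow through $y$, producing a unit $x$–$z$ flow of strictly smaller dissipation than $R(x,y)+R(y,z)$ (the first-order term vanishes at the optimum of each piece but the cross/reduction term is strictly negative to first order when the supports genuinely overlap near $y$, i.e. when $y$ is not a cut vertex). Hence $R(x,z) < R(x,y)+R(y,z)$. One must make sure $x,y,z$ are in the same component (otherwise both sides may be $\infty$ or the statement needs interpreting); I would either assume connectedness or handle the disconnected case separately, noting that if $y$ is not in the component of $x$ and $z$ then "all paths pass through $y$" is vacuously false and $R(x,z)$ is finite while $R(x,y)=\infty$, so (i) also fails.

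\textbf{Main obstacle.} The delicate point is the strict-inequality direction: turning "there exists a bypass path" into a genuine strict decrease of the effective resistance. The clean way is a monotonicity/Rayleigh-type argument — collapsing $y$ into, say, $x$ can only decrease resistances, and combined with a series-network identity one pins down exactly when equality in the triangle inequality holds. Making the perturbation argument rigorous (choosing $\varepsilon$, checking the flow remains a flow, and that the dissipation strictly drops) is the part that needs genuine care; alternatively one can phrase the whole equivalence through the fact that $R$ restricted to a two-connected block is strictly subadditive, which is itself proved via Rayleigh monotonicity. I would present the flow-perturbation version as the primary argument and remark on the Rayleigh-monotonicity shortcut.
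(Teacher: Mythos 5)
Your overall plan (variational characterization, cut--vertex decomposition for (ii)$\Rightarrow$(i), contraposition with strictness for (i)$\Rightarrow$(ii)) is the right shape, but both directions have genuine gaps as written. In (ii)$\Rightarrow$(i) your concrete computation proves the wrong inequality. First, the normalization is inconsistent: you cannot have both $Q(f)=1$ and $f(z)-f(x)=R(x,z)$; with $Q(f)=1$ one has $a+b=\sqrt{R(x,z)}$. With that fixed, the chain $1=Q_{x\text{-side}}(f)+Q_{z\text{-side}}(f)\geq a^2/R(x,y)+b^2/R(y,z)\geq (a+b)^2/(R(x,y)+R(y,z))$ only re-derives the triangle inequality $R(x,z)\leq R(x,y)+R(y,z)$; the content of (i) is the reverse inequality. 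To extract it along your route you would need equality in the Cauchy--Schwarz step, i.e.\ the current balance $a/R(x,y)=b/R(y,z)$ at the cut vertex, and also the claim that the restriction of the $(x,z)$-optimizer to each side \emph{is} the optimizer for the corresponding pair --- i.e.\ that finite-energy harmonicity off two points is \emph{sufficient} for optimality, which is not contained in Proposition~\ref{p:res} (it only gives necessity plus uniqueness of the maximizer) and is delicate on infinite, non-locally finite graphs. The paper sidesteps all of this by arguing in the opposite direction: it takes normalized optimizers $f_x$ for $R(x,y)$ and $f_z$ for $R(y,z)$, shows $\supp f_x\subset S_x$ and $\supp f_z\subset S_z$ (the two sides of the cut vertex $y$), and glues the weighted difference $\big(f_x(x)f_x-f_z(z)f_z\big)/\sqrt{f_x(x)^2+f_z(z)^2}$ into a single test function with $Q=1$ and $(f(x)-f(z))^2=R(x,y)+R(y,z)$, so the sup-definition immediately gives $R(x,z)\geq R(x,y)+R(y,z)$, and the triangle inequality finishes.

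In (i)$\Rightarrow$(ii) the decisive strictness step is exactly the point you acknowledge but do not resolve. Moreover the mechanism you propose is flawed as stated: for flows the energy is a quadratic form, $\mathcal E(\theta_1+\theta_2)=\mathcal E(\theta_1)+\mathcal E(\theta_2)+2\langle\theta_1,\theta_2\rangle$, and the cross term has no fixed sign, so ``dissipation at most $\mathcal E(\theta_1)+\mathcal E(\theta_2)$ with equality only for disjoint supports'' is false in general; the $\varepsilon$-perturbation that is supposed to produce a strict decrease is never constructed, and invoking Thomson duality on infinite graphs with merely summable $b$ (free vs.\ wired resistance) would itself need justification. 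The paper gets strictness on the function side with a short argument you could adopt: take the optimizer $f$ for $R(x,z)$, truncate $f_x=f\vee f(y)$, $f_z=f\wedge f(y)$; Young's inequality and $Q(f)\geq Q(f_x)+Q(f_z)$ give $R(x,z)\leq \frac{(f_x(x)-f_x(y))^2}{Q(f_x)}+\frac{(f_z(z)-f_z(y))^2}{Q(f_z)}$, and a bypass path avoiding $y$ produces an edge $b(v,w)>0$ with $f(v)>f(y)\geq f(w)$, $w\neq y$, so $Lf_z(w)=Lf(w)+b(v,w)(f(v)-f(y))\neq 0$; by the harmonicity characterization in Proposition~\ref{p:res}, $f_z$ is then strictly suboptimal for $R(y,z)$, yielding $R(x,y)+R(y,z)>R(x,z)$. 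Until you supply a rigorous substitute for that strictness step (and repair the reverse inequality in the other direction), the proposal does not constitute a proof.
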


\begin{proof}
Without obstruction,	we assume that $b$ is connected.

	(i) $\Rightarrow$ (ii): We show the implication by contraposition. Let $f$ be a function satisfying $Q(f)=1$ and $(f(x)-f(z))^2=R(x,z)$ and $f(x)>f(z)$. Then, $f(x)>f(y)>f(z)$.
	We write
	$f_x:= f \vee f(y)$ and $f_z:=f \wedge f(y)$. By Young's inequality and since $1=Q(f) \geq Q(f_x) + Q(f_y)$, we have
	\[
	\frac{(f_x(x) - f_x(y))^2}{Q(f_x)} + 	
	\frac{(f_z(z) - f_z(y))^2}{Q(f_z)} \geq 
	\frac{(f(x) - f(z))^2}{Q(f_x) + Q(f_z)}
	\geq R(x,z).
	\]
	By definition, we have
	$R(x,y) \geq \frac{(f_x(x) - f_x(y))^2}{Q(f_x)}$ and
	$R(y,z) \geq \frac{(f_z(z) - f_z(y))^2}{Q(f_z)}$.
	Suppose there is a path from $x$ to $z$ not passing through $y$. Then, there are vertices $v,w\in X$ with $ b(v,w)>0 $ on this path such that $f(v) > f(y)\ge f(w) $ and $y \neq w$. 	
	Applying Proposition~\ref{p:res} gives  $Lf(w) = 0$ and thus,
	$Lf_z(w) = Lf(w) + b(v,w)(f(v) - f(y)) \neq 0$, implying
     $R(y,z) > \frac{(f_z(z) - f_z(y))^2}{Q(f_z)}$. Consequently,
	\[
	R(x,y)+R(y,z)  
	>     \frac{(f_x(x) - f_x(y))^2}{Q(f_x)} + 	
	\frac{(f_z(z) - f_z(y))^2}{Q(f_z)}  
	\geq  R(x,z).
	\]

	(ii) $\Rightarrow$ (i): Let $ x,y,z\in X $. Assume every path from $ x $ to $ z $ passes through $ y $.
	We define
	\[
	S_x:=\{w\in X \setminus\{y\}\mid \mbox{ there exists a path from } x \mbox{ to } w \mbox{ not passing through } y\}.
	\]
	Since every path from $x$ to $z$ passes through $y$, we have
	\[
	S_x=\{w\in X \setminus\{y\}: \mbox{ every path from } z \mbox{ to } w \mbox{  passes through } y\}.
	\]
	Analogously, we define
	\begin{align*}
	S_z &:=\{w\in X \setminus\{y\}: \mbox{ there exists a path from } z \mbox{ to } w \mbox{ not passing through } y\} \\
	&= \{w\in X \setminus\{y\}: \mbox{ every path from } x \mbox{ to } w \mbox{  passes through } y\}.
	\end{align*}
	Hence, $$ S_x\cap S_z = \emptyset \qquad \mbox{and}\qquad
	b(S_x\times S_z) =0,
	 $$ i.e., $ b(v,w)=0 $ for all $ v\in S_{x},w\in S_{z} $.
	Obviously, $x \in S_x$ and $z \in S_z$.
	
	We take a function $f_x$ such that. $Q(f_x)=1$ and $f_x(y)=0$ and $f_x(x)^2=R(x,y)$ and $f_x(x)>0$. Analogously, we take a function
	$f_z$, such that  $Q(f_z)=1$ and $f_z(y)=0$ and $f(z)^2=R(z,y)$ and $f_z(z)>0$.
	
	We claim that $\supp (f_x) \subset S_x$.
	To prove the claim, we set $\widetilde{f_x}:=f_x \cdot 1_{S_x}$ and observe that it suffices to show $Q(\widetilde{f_x})\leq Q(f_x)$ due to uniqueness of the minimizer following from Proposition~\ref{p:res}. To this end, observe that every neighbor of $ w \in S_{x}$ is either in $ S_{x} $ or $ y $. Since $ f_{x}(y)=0 $ we infer
		\begin{align*}
		\Gamma (\widetilde{f_x})(w) &= \Gamma( f_x)(w),\qquad w \in S_x.
		\end{align*}
		Moreover, since  $S_x\cap S_z = \emptyset$ and  $b(S_x\times S_z) = 0$ we have $ \widetilde{f}_{x}=0 $ on $ S_{z} $ and all neighbors of vertices in $ S_{z} $. Thus,
		\begin{align*}
		\Gamma (\widetilde{f_x})(w) &= 0,\qquad w \in S_z 
		\end{align*}
		Finally, on all neighbors of $ y $ in $  S_{x}$ the functions $ f_{x} $ and $ \widetilde f_{x} $ agree, for all other neighboring vertices $ w $ we have $ (\widetilde f_{x}(y)-\widetilde f_{x}(w))^{2}=0 $ and, hence,
	\begin{align*}
	\Gamma (\widetilde{f_x})(y) &\leq \Gamma (f_x)(y).  
	\end{align*}
	Thus, $Q(\widetilde{f_x})\leq Q(f_x)$ and, therefore, $\supp (f_x) \subset S_x$.  
	Analogously, one shows   $\supp (f_z) \subset S_z$.	As a consequence the function
	\begin{align*}
	 f:=  \frac{1}{\sqrt{f_{x}(x)^{2}+f_{z}(z)^{2}}}(f_{x}(x) f_{x}- f_{z}(z)f_{z})
	\end{align*} 
	satisfies 
	\begin{align*}
	Q(f)= \frac{1}{{f_{x}(x)^{2}+f_{z}(z)^{2}}}(f_{x}(x)^{2} Q(f_{x})+ f_{z}(z)^{2}Q(f_{z})=1
	\end{align*}
	and, therefore, since we have
	 $ f(x)=f_{x}(x)^{2}/ \sqrt{f_{x}(x)^{2}+f_{z}(z)^{2}}$ and  $ f(z)=-f_{z}(z)^{2}/ \sqrt{f_{x}(x)^{2}+f_{z}(z)^{2}}$, we obtain
	\begin{align*}
	R(x,y)\ge (f(x)-f(z))^{2}
	=f_{x}(x)^{2}+f_{z}(z)^{2}=R(x,y)+R(y,z).
	\end{align*}
	Since $ R $ satisfies the triangle inequality statement (i) follows.
\end{proof}

\begin{thm}
	Let $ b $ be a connected graph over $ X $. The following statements are equivalent:
	\begin{itemize}
		\item [(i)] $ R=\de_{1/b} $
		\item [(ii)] $ b $ is a tree.
	\end{itemize}
\end{thm}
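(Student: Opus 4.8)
The plan is to handle the two implications separately. The direction that $b$ being a tree gives $R=\delta_{1/b}$ is classical and I would simply cite it: when $b$ is a tree the unique path between two vertices is the only path, so it realises $\delta_{1/b}$, and a routine series/parallel computation identifies its $1/b$-length with the effective resistance (see \cite[Lemma~8.1]{GHKLW15}).

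For the converse I would argue by contraposition: assuming $b$ is connected but not a tree, it contains a cycle, hence a non-bridge edge $e_0=\{x_0,y_0\}$, and I want to show $R\neq\delta_{1/b}$. Two preliminary facts carry the argument. First, $1/b$ is essentially locally finite: since $\sum_y b(x,y)<\infty$, for each $R>0$ only finitely many $y$ satisfy $b(x,y)>1/R$, i.e.\ $1/b(x,y)<R$. Second, \emph{every} non-bridge edge $\{x,y\}$ satisfies the strict inequality $R(x,y)<1/b(x,y)$: there is a path $P$ between $x$ and $y$ avoiding the edge $\{x,y\}$, so inside $b$ this edge sits in parallel with a further path, and adding conductance strictly lowers the effective resistance (Rayleigh monotonicity together with the series and parallel laws), which gives $R(x,y)\le\bigl(b(x,y)+1/l_{1/b}(P)\bigr)^{-1}<1/b(x,y)$.

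Now suppose $R=\delta_{1/b}$. By the two facts $\delta_{1/b}(x_0,y_0)=R(x_0,y_0)<1/b(x_0,y_0)$, so the one-edge path $(x_0,y_0)$ is not a $\delta_{1/b}$-geodesic. The key combinatorial point is that if $P$ is a path from $x$ to $y$ with at least two edges and $\{x,y\}$ is an edge, then $P$ together with $\{x,y\}$ is a cycle, so every edge of $P$ is non-bridge. Hence it is enough to produce \emph{one} non-bridge edge $\{a,c\}$ which \emph{is} a $\delta_{1/b}$-geodesic, for then $R(a,c)<1/b(a,c)=\delta_{1/b}(a,c)$ contradicts $R=\delta_{1/b}$. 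If a finite $\delta_{1/b}$-geodesic $G$ from $x_0$ to $y_0$ exists this is immediate: $l_{1/b}(G)<1/b(x_0,y_0)$ forces $G$ to have at least two edges, its first edge is again a geodesic (subpaths of geodesics are geodesics, by the triangle inequality) and is non-bridge because $G$ together with $e_0$ forms a cycle.

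The real obstacle, and where the Hopf--Rinow machinery is needed, is that without local finiteness a geodesic from $x_0$ to $y_0$ need not exist. I would take a minimising sequence of paths $(P_n)$ from $x_0$ to $y_0$, eventually of $1/b$-length $<1/b(x_0,y_0)$. If only finitely many distinct paths occur, one of them attains $\delta_{1/b}(x_0,y_0)$ and the previous paragraph applies. Otherwise essential local finiteness of $1/b$ lets me apply Lemma~\ref{Lemma path} to obtain an infinite path $\gamma_\infty$ starting at $x_0$ with $l_{1/b}(\gamma_\infty)<\infty$ such that for every $n$ infinitely many $P_m$ agree with $\gamma_\infty$ on the first $n+1$ vertices; a short cut-and-paste argument (replace an initial segment of such a $P_m$ by a near-geodesic to $\gamma_\infty(n)$ and extract a simple path) shows that each initial segment of $\gamma_\infty$ is a $\delta_{1/b}$-geodesic. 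If $\gamma_\infty$ ever reaches $y_0$ I recover a finite geodesic and argue as before. If not, the first edge $\{x_0,\gamma_\infty(1)\}$ is a geodesic edge with $\gamma_\infty(1)\neq y_0$; were it non-bridge I would be done, so it must be a bridge. But deleting this bridge separates $X$ into two components, and $e_0=\{x_0,y_0\}$ --- a different edge, since $y_0\neq\gamma_\infty(1)$ --- survives the deletion, so $y_0$ lies in the component of $x_0$; yet some $P_m$ runs from $x_0$ to $y_0$ beginning with the step $x_0\to\gamma_\infty(1)$, hence it leaves the component of $x_0$ across that bridge and, being injective, can never get back to $y_0$ --- a contradiction. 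In every case $R=\delta_{1/b}$ fails; this paragraph, rather than the finite-graph core, is the part I expect to require the most care, since one must check that the geodesic property survives the limiting construction supplied by Lemma~\ref{Lemma path}.
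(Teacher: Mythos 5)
Your proposal is correct, but for the implication (i)\,$\Rightarrow$\,(ii) it follows a genuinely different route than the paper. The paper's proof is much shorter and avoids the completeness machinery entirely: it fixes a vertex $x$ on a cycle, lets $C$ be the set of neighbours $y$ of $x$ with $(x,y)$ on a cycle, and uses summability of $b(x,\cdot)$ to pick $y\in C$ maximizing $b(x,y)$ over $C$. This maximality forces $\delta_{1/b}(x,y)=1/b(x,y)$ directly (any shorter path would start with an edge $(x,x_1)$ of larger conductance that again lies on a cycle, contradicting maximality), and the strict inequality $R(x,y)<1/b(x,y)$ then follows from the variational characterization in Proposition~\ref{p:res}: equality would force the optimal harmonic function to be constant across every edge other than $(x,y)$, which is impossible since $(x,y)$ lies on a cycle. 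You instead establish the strict inequality $R<1/b$ on \emph{every} non-bridge edge (via Rayleigh monotonicity and the series/parallel laws, which are standard but not proved in the paper; the harmonicity argument from Proposition~\ref{p:res} would serve equally well), and then, assuming $R=\delta_{1/b}$, manufacture a non-bridge edge which is a $\delta_{1/b}$-geodesic: either a finite geodesic from $x_0$ to $y_0$ exists and its first edge works, or you invoke essential local finiteness of $1/b$ and Lemma~\ref{Lemma path} to build $\gamma_\infty$, check that its initial segments are geodesics, and rule out the bridge alternative by the component-separation argument. I checked the delicate steps (the geodesy of initial segments of $\gamma_\infty$, and the injectivity contradiction after crossing the bridge) and they do go through, so the proof is valid; its advantage is that it localizes the obstruction at an arbitrary non-bridge edge and ties the statement to the Hopf--Rinow machinery of the paper, while the paper's maximal-conductance choice buys a self-contained five-line argument that makes your entire case-two analysis unnecessary.
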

\begin{proof}
(ii) $ \Longrightarrow $ (i): This follows from the theorem above or \cite[Lemma~8.1]{GHKLW15} (see the arxiv version for a detailed proof).

(i) $ \Longrightarrow $ (ii): Assume there exists a cycle. Let $ x $ be a vertex on this cycle. Denote by $ C $ the set of all neighbors $ y $ of $ x $ such that the edge $ (x,y) $ is contained in a cycle. Since $ b $ is summable about $ x $ there exists $ y\in C $ such that $ b(x,y) $ is the maximum of $ b(x,\cdot) $ on $ C $. Then,  $ d_{1/b}(x,y)=1/b(x,y) $. Moreover, any function $ f $ on $ X $ such that $ f(x)=1 $ and $ f(y)=0 $ satisfies $ Q(f)\ge b(x,y) $ and, therefore, $ R(x,y)\ge \de_{1/b}(x,y) $. 
In order to achieve equality, we need that such a function $ f $ satisfies  $ f(v)=f(w) $ for all $ v,w\in X $  with $ b(v,w)>0 $ and $ \{x,y\} \neq\{v,w\}$. This however, is impossible as the edge $ (x,y) $ is contained in a cycle.
\end{proof}

In the above corollary, we can also weaken the assumption $R=\delta_{1/b}$ by only requiring that the weight is supported only on the edges.
We say a weight $w$ is \emph{compatible} to a graph $b$ if $w(x,y) = \infty$ whenever $b(x,y)=0$ and $x \neq y$.
It turns out that the existence of a compatible weight inducing the resistance metric is related to\emph{ block graphs}, i.e., graphs where any two vertices are connected by a unique induced path.

\begin{thm}
	Let $b$ be a locally finite graph over $X$.
Then, the following statements are equivalent:
\begin{itemize}
	\item [(i)] $ R=\de_{w}$ for some $w$ compatible to $b$.
	\item [(ii)] The graph $b$ is a block graph.
\end{itemize}	
\end{thm}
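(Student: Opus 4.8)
The plan is to prove both implications using Proposition~\ref{thm:resTriangle} (sharpness of the triangle inequality for $R$) together with the elementary theory of blocks, i.e.\ maximal $2$-connected subgraphs. Throughout we may assume $b$ is connected, since otherwise both $R$ and any compatible $\delta_w$ equal $+\infty$ between distinct connected components and one argues on each component separately. We use two standard facts: every block of a block graph is a complete graph, and two vertices lying in a common block are not separated by any single vertex (this follows from $2$-connectedness via Menger's theorem).

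For (ii) $\Rightarrow$ (i): assuming $b$ is a block graph, I take the weight $w$ given by $w(x,y):=R(x,y)$ whenever $x=y$ or $b(x,y)>0$, and $w(x,y):=\infty$ otherwise. This is a weight (symmetric, positive off the diagonal since $R$ is a metric) and compatible with $b$ by construction, so it remains to show $\delta_w=R$. The inequality $\delta_w\geq R$ is immediate: a path of finite $w$-length runs only along edges, where $w=R$, so its $w$-length dominates $R$ of its endpoints by the triangle inequality for $R$. For $\delta_w\leq R$ I argue by induction on the graph distance $d(x,y)$. If $d(x,y)=1$, then $\delta_w(x,y)\leq w(x,y)=R(x,y)$. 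If $d(x,y)\geq 2$, then $x\not\sim y$, so $x$ and $y$ lie in different blocks (a common block would be a clique); choosing a shortest path $x=u_0,u_1,\dots,u_m=y$, one checks, using that blocks are cliques, that $u_1$ separates $x$ from $y$. Then $d(u_1,y)=m-1$, and Proposition~\ref{thm:resTriangle} gives $R(x,y)=R(x,u_1)+R(u_1,y)$; combining this with the induction hypothesis $\delta_w(u_1,y)=R(u_1,y)$, the triangle inequality for $\delta_w$, and $\delta_w\leq w$, we obtain $\delta_w(x,y)\leq w(x,u_1)+\delta_w(u_1,y)=R(x,u_1)+R(u_1,y)=R(x,y)$.

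For (i) $\Rightarrow$ (ii): I argue by contraposition. If $b$ is not a block graph, then some block $B$ is not complete, hence $B$ is $2$-connected with at least three vertices and contains a non-adjacent pair $x,y$. Since $B$ is $2$-connected, $x$ and $y$ are joined by two internally disjoint paths inside $B$, so no vertex $v\in X\setminus\{x,y\}$ separates $x$ from $y$ in $b$, and Proposition~\ref{thm:resTriangle} yields $R(x,v)+R(v,y)>R(x,y)$ for every $v\in X\setminus\{x,y\}$. Suppose now $R=\delta_w$ for some $w$ compatible with $b$. Every path $P=(x=x_0,\dots,x_n=y)$ of finite $w$-length uses only edges, and $n\geq 2$ since $x\not\sim y$; from $w\geq\delta_w=R$ and the iterated triangle inequality for $R$,
\[
l_w(P)=\sum_{i=1}^n w(x_{i-1},x_i)\geq\sum_{i=1}^n R(x_{i-1},x_i)\geq R(x,x_1)+R(x_1,y),
\]
where $x_1$ is some neighbour of $x$. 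Since $b$ is locally finite, $x$ has only finitely many neighbours, so $\eps:=\min\{R(x,v)+R(v,y)-R(x,y)\mid v\neq y,\ b(x,v)>0\}>0$; hence $l_w(P)\geq R(x,y)+\eps$ for all such $P$, and therefore $\delta_w(x,y)\geq R(x,y)+\eps>R(x,y)=\delta_w(x,y)$, a contradiction.

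The step I expect to be the main obstacle is the final estimate in (i) $\Rightarrow$ (ii): the naive argument only shows that each individual $x$--$y$ path is strictly longer than $R(x,y)$, which for an infinite graph does not by itself prevent the infimum $\delta_w(x,y)$ from equalling $R(x,y)$. Local finiteness of $b$ is exactly what promotes the pointwise strict inequalities --- one for each edge leaving $x$ --- to a uniform positive gap $\eps$. A subsidiary point requiring care is the block bookkeeping in (ii) $\Rightarrow$ (i): verifying that the first interior vertex of a shortest path between vertices of distinct blocks of a block graph is a separating vertex (so Proposition~\ref{thm:resTriangle} applies), and that the induction on $d(x,y)$ is legitimate because shortest paths in a connected graph are finite.
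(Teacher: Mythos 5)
Your proof is correct, and for the implication (i) $\Rightarrow$ (ii) it takes a genuinely different route from the paper. The paper argues directly: for each pair $x\neq y$ it produces a unique induced path, distinguishing the case where a $w$-geodesic from $x$ to $y$ exists (then Proposition~\ref{thm:resTriangle} forces every induced path to run through its vertices) from the case where none exists, which is excluded by extracting an infinite $w$-geodesic via Lemma~\ref{Lemma path}; local finiteness enters through that extraction lemma. You instead argue by contraposition and replace all of this limiting machinery by a uniform gap: if some block is not a clique, you pick a non-adjacent, non-separable pair $x,y$, note that every finite-$w$-length path leaves $x$ through one of its finitely many neighbours $v$, and use $w\geq\delta_w=R$ plus the strict inequality $R(x,v)+R(v,y)>R(x,y)$ from Proposition~\ref{thm:resTriangle} to get $\delta_w(x,y)\geq R(x,y)+\eps$ with $\eps>0$ the minimum over those neighbours. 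This is shorter, avoids Lemma~\ref{Lemma path} entirely, and isolates exactly where local finiteness is used (only at the single vertex $x$), whereas the paper's route stays closer to the definition of block graphs and to the geodesic framework of its Hopf--Rinow theorem. For (ii) $\Rightarrow$ (i) your argument is essentially the paper's (same weight $w=R$ on edges, same appeal to Proposition~\ref{thm:resTriangle}), differing only in bookkeeping: you telescope by induction on graph distance through the first cut vertex of a shortest path, while the paper telescopes along the unique induced path. One point you should make explicit rather than cite as standard: the paper \emph{defines} block graphs by uniqueness of induced paths, so your use of ``every block is a clique'' (and its converse, needed to start the contraposition) invokes the classical equivalence of the two characterizations, and the claim that the first interior vertex $u_1$ of a shortest path separates $x$ from $y$ deserves the short verification you allude to (any $x$--$y$ path passes through the cut vertices of the block tree in order, and in a graph with complete blocks the shortest path is exactly $x,c_1,\ldots,c_k,y$). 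These facts are true and elementary, so there is no gap, but spelling them out would make the proof self-contained relative to the paper's definitions.
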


\begin{proof}
(ii) $ \Longrightarrow $ (i):
We choose $$w(x,y)=\begin{cases}
R(x,y)&: x\sim y\mbox{ or } x=y\\
\infty &:\mathrm{otherwise}.
\end{cases}$$
By triangle inequality, $\delta_w \geq R$. Moreover, $\delta_w(x,y) = R(x,y)$ if $x\sim y$.
It remains to show $\delta_w(x,y) \leq R(x,y)$ for non-adjacent $x,y$.
Let $(x_0,\ldots,x_n)$ be the unique induced path from $x$ to $y$.
Then, every path from $x$ to $y$ must pass through all $x_i$ for $i=0,\ldots, n$.
Thus, we can apply Theorem~\ref{thm:resTriangle} to conclude
\[
R(x,y)=\sum_{i=1}^n R(x_i,x_{x-1})= \sum_{i=1}^n w(x_i,x_{x-1}) \geq \delta_w(x,y)
\]
which proves $R=w_\de$.

(ii) $ \Longrightarrow $ (i):
We assume that $R=\delta_w$ for some $w$ compatible to the graph.
Let $x \neq y \in X$.
We aim to show that there is a unique induced path from $x$ to $y$.
We distinguish two cases.

Case one: There is a $w$-geodesic $(x_0,\ldots,x_n)$ from $x$ to $y$.
Suppose there exists an induced path $\gamma \neq (x_0,\ldots,x_n)$ from $x$ to $y$.
Then, there exists $i \in \{0,\ldots n\}$ such that  $\gamma$ does not pass through $x_i$.
Hence by Theorem~\ref{thm:resTriangle},
$R(x,y) < R(x,x_i) + R(x_i,y)$. But this is a contradiction to the fact that  $(x_0,\ldots,x_n)$ is a geodesic. So in the first case, there exists a unique induced path from $x$ to $y$.

Case two: There is no $w$-geodesic from $x$ to $y$.
Then, there exists a sequence of paths $\gamma_n$ from $x$ to $y$ with $l_w(\gamma_n) \to R(x,y)$ as $n \to \infty$.
By local finiteness and by Lemma~\ref{Lemma path}, there exists an infinite path $\gamma_\infty$ such that  for all $k \in \N$ there exists $n \in \N$ such that  $\gamma_n(i)=\gamma_\infty(i)$ for $i=0,\ldots, k$.
Then, $\gamma_\infty$ is a $w$-geodesic by triangle inequality.
Let $k$ be large such that  $\gamma_1$ does not pass through $\gamma_\infty(k-1)$.
Let $\gamma_n$ be a path such that  $\gamma_\infty(i)=\gamma_n(i)$ for all $i \leq k$.
Let $$W:= \ran(\gamma_1) \cup (\ran (\gamma_n) \setminus  \{\gamma_\infty(0),\ldots, \gamma_\infty(k-1)\}).$$
Then as a union of paths sharing the vertex $y$, the by $W$ induced subgraph is connected. Hence, there exists a path from $x$ to $\gamma_\infty(k)$ within $W$. Observe, $\gamma_\infty(k-1) \notin W$.
Thus, by Theorem~\ref{thm:resTriangle},
$R(x,\gamma_\infty(k))< R(x,\gamma_\infty(k-1)) + R(\gamma_\infty(k-1),\gamma_\infty(k))$.
But this is a contradiction to the fact that $\gamma_\infty$ is a geodesic.
Hence, the second case can not occur.
The complete case distinction finishes the proof.
\end{proof}

\bibliographystyle{alpha}
	\bibliography{literature}

\end{document}